\documentclass[11pt,oneside]{amsart}
\usepackage[utf8]{inputenc}
\usepackage[greek,english]{babel}
\usepackage{alphabeta}
\usepackage{amsmath}
\usepackage{amssymb}
\usepackage{amsthm}
\usepackage{physics}
\usepackage{hyperref}
\hypersetup{
	colorlinks=true,
	linkcolor=blue,
	filecolor=magenta,
	urlcolor=cyan,
}
\newtheorem{theorem}{Theorem}[section]
\newtheorem{definition}[theorem]{Definition}
\newtheorem{proposition}[theorem]{Proposition}
\newtheorem{lemma}[theorem]{Lemma}
\theoremstyle{definition}
\newtheorem{defn}[theorem]{Definition}

\newtheorem{example}[theorem]{Example}

\def\Aa{\mathcal {AA}}
\def\Hor{\mathcal{H}}
\def\divH{\mathrm{div}_{H}}
\def\dive{\mathrm{div}_{\epsilon}}

\def\nablae{\nabla^{\epsilon}}

\def\R{\mathbb{R}}
\def\C{\mathbb{C}}

\numberwithin{equation}{section}

\begin{document}
	\title[Isoperimetric inequality on 3D-contact, non-unimodular Lie groups]{An isoperimetric inequality on three-dimensional, contact,  non-unimodular Lie groups}

	\author{ Ioannis D. Platis \and Georgios I. Simantiras}
	\begin{abstract}
	We prove an isoperimetric inequality for compact bodies bounded by surfaces embedded into a connected, contact, non-unimodular 3-dimensional Lie group.
	\end{abstract}
	\keywords{Contact Lie groups, non-unimodular Lie groups, isoperimetric inequality\\ 
{\it 2010 Mathematics Subject Classification:} 53C17, 22E30 }
	\maketitle
	
	\newcommand{\Addresses}{%
		\bigskip\bigskip
		\footnotesize
		
		\noindent
		I.D.~Platis, \textsc{Department of Mathematics, University of Patras, Panepistimioupolis, 26504 Rion, Achaia, Greece}\par
		\textit{E-mail address}: \href{mailto:idplatis@upatras.gr}{idplatis@upatras.gr}
		
		\medskip
		
		\noindent
		G.I.~Simantiras, \textsc{Department of Mathematics, University of Patras, Panepistimioupolis, 26504 Rion, Achaia, Greece}\par
		\textit{E-mail address}: \href{mailto:gsimantiras@upatras.gr}{gsimantiras@upatras.gr}
		
		\bigskip
	}
	
	\date{\today}

\section{Introduction}
The study of sub-Riemannian geometry on Lie groups provides profound insights into spaces equipped with completely non-integrable distributions. In this paper, we investigate the geometric and measure-theoretic properties of three-dimensional, connected Lie groups $G$ equipped with a contact form $\theta$. A central feature of such spaces is the horizontal distribution $\mathcal{H} = \ker\theta$, which is completely non-integrable and generates the entire tangent bundle through its Lie brackets. By endowing $\mathcal{H}$ with a left-invariant sub-Riemannian metric $g_{CC}$, the group $G$ becomes a metric space governed by the Carnot-Carathéodory distance $d_{CC}$. 

A critical algebraic classification of these groups relies on their unimodularity. A Lie group $G$ is unimodular if the trace of the adjoint representation vanishes for every vector field in its Lie algebra. In this work, we focus specifically on the non-unimodular case, where this trace is non-zero for at least one left-invariant vector field. Two characteristic examples illustrating this dichotomy are the Heisenberg group, which is unimodular, and the Affine-Additive group, which is non-unimodular. 

To study the volume and perimeter of domains within $G$, we employ a technique of Riemannian approximation. By introducing a family of Riemannian metrics $g_\epsilon$ that rescale the Reeb field $T$ by a parameter $\epsilon > 0$, we create a sequence of metric spaces $(G, d_\epsilon)$ that converge to the sub-Riemannian space $(G, d_{CC})$ in the pointed Gromov-Hausdorff sense as $\epsilon \to 0^+$. This approximation allows us to adapt classical Riemannian tools, such as the Levi-Civita connection and Gauss' Divergence Theorem, to the sub-Riemannian setting by analyzing the limit as $\epsilon \to 0^+$. 

The main result of this paper is an isoperimetric inequality for three-dimensional, contact, non-unimodular Lie groups. We prove in Theorem \eqref{ISOproof} that for every $\mu_G$-measurable domain $\Omega \subset G$ with $\mu_G$-volume $\mathrm{Vol}_{G}(\Omega)$ bounded by a $C^1$-surface $\Sigma = \partial\Omega$ with perimeter ${\rm Per}_G(\Sigma)$ (see Definition \eqref{def-perimeter}), the following inequality holds:
$$ \mathrm{Vol}_{G}(\Omega) \leq C \cdot \mathrm{Per}_{G}(\partial\Sigma), $$
where $C = C(G)$ is a positive constant.  Furthermore, we examine the strictness of this bound and show that for any domain of finite volume, equality cannot be achieved due to geometric obstructions arising from the rigidity of left-invariant horizontal normals.

This inequality may be viewed as a sub-Riemannian version of the seminal result of Gromov, i.e.,  the linear isoperimetric inequality $$\text{Area}(\Omega) \le C \cdot \text{Length}(\partial\Omega).$$ Gromov established this inequality as the defining geometric property of the hyperbolic plane and its coarse generalisations in his work on hyperbolic groups, \cite{Gromov87}, see also \cite{Gromov83}.

\section{Preliminaries}
\subsection{Three dimensional contact sub-Riemannian Lie groups}
Let $G$ be an arbitrary 3-dimensional connected Lie group $G$; we denote by $e$ the identity element of $G$ and also, for $g\in G$ we denote by $L_g:G\to G$ the left translation given by $L_g(h)=gh,\, h\in G$. We shall assume that $G$ is equipped with a contact form $\theta$, i.e., a $1$-form such that $\theta\wedge d\theta\neq 0$. Due to continuity we may always assume that $\theta\wedge d\theta>0$ and then 
$d\mu_{G}:=\theta\wedge d\theta$ is the left-invariant Haar measure on $G$. The horizontal distribution $\Hor$ in the tangent bundle ${\mathrm T}\,G$ is given by $\Hor:=\ker\theta={\rm span}\{X,Y\}$, where $X$ and $Y$ are left-invariant vector fields. Since $\theta$ is a contact form, the distribution $\Hor$ is completely non-integrable: ${\mathrm T}\,G=\Hor+[\Hor,\Hor]$. We denote by $T$ the Reeb field, i.e., the left-invariant vector field such that $\theta(T)=1$ and $i_{T}d\theta\,(\cdot)=0$. A left-invariant frame for ${\mathrm T}\,G$ is given by $\{X,Y,T\}$ and we assign a coframe $\{\omega_1,\omega_2,\theta\}$ given by the dual 1-forms to $\{X,Y,T\}$. We endow $G$ with a CR structure $J:\Hor\to \Hor$ defined by $JX=Y$, $JY=-X$ which we extend to the whole tangent bundle by setting $JT=0$. We denote by $\mathfrak{g}$, the Lie algebra of $G$:
$$\mathfrak{g}={\rm T}(G)=\mathrm{span}\{X,Y,T\}=\Hor \oplus \mathrm{span}\{T\}.$$
The {\it sub-Riemannian metric} $g_{CC}( \cdot\, , \cdot ) :\mathcal{H}\times\mathcal{H}\to \R$ is the left-invariant metric in $G$ such that $\{X,Y\}$ is an orthonormal frame:
$$g_{CC} (X,X)=g_{CC}(Y,Y)=1 \quad\text{and} \quad g_{CC}( X,Y)=0.$$
An absolutely continuous curve $\gamma:[a,b]\to G$ shall be called {\it horizontal} if $\dot{\gamma}(s)\in\Hor_{\gamma(s)}$ for almost every $s\in[a,b]$. Then, the {\it horizontal velocity} $\norm{\dot\gamma(s)}_{\Hor}$ of $\gamma$ is defined by
$$\norm{\dot\gamma(s)}_{\Hor}=\sqrt{g_{CC}(\dot\gamma(s),X_{\gamma(s)})^2+g_{CC}(\dot\gamma(s),Y_{\gamma(s)})^2}\,,$$
and the {\it horizontal length} \(\ell_H(\gamma)\) of \(\gamma\) is:
 $$\ell_H(\gamma)=\int_a^b\norm{\dot\gamma(s)}_{\Hor}\,ds.$$
For every $p, q \in G$, the {\it Carnot-Carath\'eodory or sub-Riemannian distance} $d_{CC}$ associated with the sub-Riemannian metric $g_{CC}$ is defined by: 
\begin{equation}\label{CC distance}
    d_{CC}(p,q):=\inf_{\gamma\in \Gamma_{(p,q)}}\{\ell_H(\gamma) \},
\end{equation}
where $\Gamma_{(p,q)}=\{\gamma,\gamma:[0,1]\to G \text{ horizontal and } \gamma(0)=p,\, \gamma(1)=q\}$. Definition \eqref{CC distance} depends only on the values of $g_{CC}$ in $\mathcal{H}\times \mathcal{H}$. Moreover, since $\mathcal{H}$ is completely non-integrable, the distance $d_{CC}$ is finite, geodesic, and induces the manifold topology (cf. \cite{Mont}).

\subsection{Unimodular and non-unimodular groups}
Let $\mathrm{ad} : \mathfrak{g}\longrightarrow \mathrm{End}(\mathfrak{g})$, $Z\mapsto \mathrm{ad}_Z:=[Z\,,\,\cdot]$, be the \emph{adjoint representation } of the Lie algebra $\mathfrak{g}$ of $G$ and let $\mathrm{Ad} : G\longrightarrow \mathrm{Aut}(\mathfrak{g})$, $g\mapsto \mathrm{Ad}_g:=D(c_g)_{e}$ be the \emph{adjoint representation} of $G$. Recall that a connected Lie group $G$ is {\it unimodular} if ${\rm tr}({\rm ad}_Z)=0$ for each $Z\in\mathfrak{g}$; equivalently, if $\det({\rm Ad}_g)=\pm1$, for each $g\in G$.

For all $g\in G$, the diffeomorphism $c_{g}(x)=gxg^{-1}$ is the \emph{conjugation map by} $g$, while $g^{-1}$ is its inverse element on $G$. In our particular case, the left-invariant vector fields $X,Y,T$ defined above satisfy the following (see \cite{Platis2026}, Proposition 2.1): there exist constants $a_i,b_i,c_i\in \R$, $i=1,2,3$ and a constant $c<0$, such that:
\begin{equation}\label{3brackets}
    [X,T]=a_1X+b_1Y, \quad [Y,T]=a_2X+b_2Y, \quad [X,Y]=a_3X+b_3Y+cT.
\end{equation}
Also,
\begin{equation}\label{eqsab}
 \left|\begin{matrix}
    a_1&b_1\\
    a_3&b_3
\end{matrix}\right|=
\left|\begin{matrix}
    a_2&b_2\\
    a_3&b_3
\end{matrix}\right|=0,\quad a_1+b_2=0.\end{equation}
From Eqs. \eqref{3brackets} and \eqref{eqsab} we straightforwardly obtain:
\begin{equation}\label{ads}
    \mathrm{ad}_X=\begin{bmatrix}
        0 & a_3 & a_1\\
        0 & b_3 & b_1\\
        0 & c & 0
    \end{bmatrix}, \, \mathrm{ad}_Y=\begin{bmatrix}
        -a_3 & 0 & a_2\\
        -b_3 & 0 & b_2\\
        -c & 0 & 0
    \end{bmatrix}, \, 
     \mathrm{ad}_T=\begin{bmatrix}
        -a_1 & -a_2& 0\\
        -b_1 & -b_2 & 0\\
        0 & 0 & 0
    \end{bmatrix},
\end{equation}
and by the $\R$-linearity of the trace we deduce
$$
\mathrm{tr}\left(\mathrm{ad}_{Z_0}\right)= fb_3-ga_3-h(a_1+b_2)=fb_3-ga_3,
$$
for any left-invariant vector field $Z_0=fX+gY+hT$, $f,g,h\in\R$.

\begin{proposition}\label{prop-unim}
Let $G$ be as above. Then $G$ is unimodular if and only if $a_3=b_3=0$. Equivalently $G$ is not unimodular if and only if $|a_3|+|b_3|\neq 0$.
\end{proposition}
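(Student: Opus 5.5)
The argument is a direct reading of the trace formula displayed just before the statement. By definition, $G$ is unimodular precisely when $\mathrm{tr}(\mathrm{ad}_{Z_0})=0$ for every $Z_0\in\mathfrak g$. Write $Z_0=fX+gY+hT$. The matrices in \eqref{ads} have diagonals $(0,b_3,0)$, $(-a_3,0,0)$ and $(-a_1,-b_2,0)$, so
$$\mathrm{tr}(\mathrm{ad}_{Z_0})=fb_3-ga_3-h(a_1+b_2).$$
The relation $a_1+b_2=0$ from \eqref{eqsab} removes the $h$-term, leaving
$$\mathrm{tr}(\mathrm{ad}_{Z_0})=fb_3-ga_3 .$$
This is the key identity, and it is already recorded in the text. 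The only ingredient I need to double-check is that the columns of \eqref{ads} really encode the brackets \eqref{3brackets} in the basis $\{X,Y,T\}$. For example, $\mathrm{ad}_T X=[T,X]=-a_1X-b_1Y$, which gives the first column of $\mathrm{ad}_T$.

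For the forward direction, assume $G$ is unimodular, so $fb_3-ga_3=0$ for all $f,g,h\in\R$. Taking $Z_0=X$ (so $f=1$, $g=h=0$) gives $b_3=0$. Taking $Z_0=Y$ gives $-a_3=0$. Hence $a_3=b_3=0$.

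For the converse, if $a_3=b_3=0$, then the trace vanishes identically in $f,g,h$, so $\mathrm{tr}(\mathrm{ad}_{Z_0})=0$ for every $Z_0$ and $G$ is unimodular. The second sentence of the proposition is just the contrapositive, restated with the observation that $a_3=b_3=0$ holds if and only if $|a_3|+|b_3|=0$.

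There is no real obstacle here. The only subtle point is that the argument depends on the identity $a_1+b_2=0$ from \eqref{eqsab}, which is quoted from \cite{Platis2026}. Without it, the trace of $\mathrm{ad}_T$ would contribute an extra condition. If one wanted a self-contained argument, one would derive $a_1+b_2=0$ from the Jacobi identity applied to the triple $X,Y,T$, together with the normalization $i_Td\theta=0$ of the Reeb field. I would simply cite it as given.
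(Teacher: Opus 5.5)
Your proposal is correct and follows the paper's argument: after using $a_1+b_2=0$, the trace $\mathrm{tr}(\mathrm{ad}_{Z_0})=fb_3-ga_3$ vanishes for all $f,g$ if and only if $a_3=b_3=0$, and your choices $Z_0=X$ and $Z_0=Y$ just make explicit the paper's remark that $f,g$ are arbitrary. Your side remark also checks out: the $T$-component of the Jacobi identity for $X,Y,T$ is $-c(a_1+b_2)$, so the contact condition $c\neq 0$ forces $a_1+b_2=0$.
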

\begin{proof}
If $G$ is unimodular then for each $Z_0$ as above we have $fb_3=ga_3$. Since $f,g$ are arbitrary, we obtain $a_3=b_3=0$. The converse is obvious.
\end{proof}
We give below two characteristic examples for each case.

\subsubsection{The Heisenberg group}
The Heisenberg group $\mathbb{H}$ is $\R^3$ with coordinates $(x,y,t)$ and group multiplication
$$
p\cdot p'=(x+x',\, y+y',\, t+t'+2(xy'-x'y)),
$$
for each $p=(x,y,t)$ and $p'=(x',y',t')$. The contact form is
$$
\theta=dt+2x\,dy-2y\,dx,
$$
the volume form is
$$
d\mu_{\mathbb{H}}=\theta\wedge d\theta=4\,dx\wedge dy\wedge dt,
$$
and the left invariant vector fields are
$$
X=\partial_x+2y\partial_t,\quad Y=\partial_y-2x\partial_t,\quad T=\partial_t.
$$
The only non zero Lie bracket is $[X,Y]=-4T$, therefore $\mathbb{H}$ is unimodular.

\subsubsection{The Affine-Additive group}\label{AAexample1}
For details, see \cite{BThesis}. The Affine-Additive group $\Aa$ is $\R \times \mathbf{H}^1_{\mathbb{C}},$ with coordinates $(a,\lambda,t)$, where $\mathbf{H}^1_\C=\{(\lambda,t) \, | \,\lambda>0,\,t\in\R\},$ is the right half-plane model for the hyperbolic plane. The group multiplication is defined by
$$ p \cdot p'=(a+a',\,\lambda \lambda' ,\, \lambda t'+t),
$$
for every $p=(a,\lambda,t)$ and $p'=(a',\lambda',t')\in \Aa$. The contact form is
$$
\theta=da-\frac{1}{\lambda}dt, 
$$
the volume form is
$$
d\mu_{\Aa}=\frac{1}{\lambda^2}da\,\wedge\, d\lambda\,\wedge dt,
$$
and the left-invariant frame comprises
\begin{equation*}
     U=\partial_{a}+\lambda\partial_{t}, \, V=\lambda\partial_{\lambda}, \quad W=\partial_{a}.
\end{equation*}
The Lie brackets are:
\begin{equation*}
    [U,V]=-U+W, \quad [U,W]=[V,W]=0,
\end{equation*}
hence in this case we have $a_3=-1$ and $\Aa$ is non-unimodular.

\subsection{Horizontal divergence}
\subsubsection{Riemannian approximation}
For $\epsilon>0$, we consider a family of Riemannian metrics $g_\epsilon$ on $G$ such that the frame $\{X,Y,T_\epsilon=\epsilon T\}$ is orthonormal and we denote its corresponding coframe by $\{\omega_1,\omega_2,\theta_\epsilon=(1/\epsilon)\theta\}$. Let $d_\epsilon$ be the associated Riemannian distance to $g_\epsilon$. The following theorem holds \cite{CDPT, Gromov}:
\begin{theorem}
The family of metric spaces $(G,d_\epsilon)$ converges to the metric space $(G,d_{CC})$ in the pointed Gromov-Hausdorff sense as $\epsilon\to 0^+$.
\end{theorem}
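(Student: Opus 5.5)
We would prove the statement by showing that the identity map from $(G,d_\epsilon)$ to $(G,d_{CC})$ is uniformly close to an isometry on balls around $e$, with the error going to $0$ as $\epsilon\to0^+$. By left-invariance of both $d_\epsilon$ and $d_{CC}$, it suffices to work with pointed balls centred at the identity $e$. Pointed Gromov--Hausdorff convergence then follows once we show that, for every $R>0$,
$$\sup\{\,|d_\epsilon(p,q)-d_{CC}(p,q)| : p,q\in \bar B_{CC}(e,R)\,\}\longrightarrow 0 \quad (\epsilon\to 0^+),$$
together with the fact that the closed balls $\bar B_{\epsilon}(e,R)$ and $\bar B_{CC}(e,R)$ are eventually comparable (each is contained in a slightly enlarged version of the other).

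The first step is monotonicity. Since $g_\epsilon$ restricted to $\Hor$ equals $g_{CC}$, every horizontal curve has the same $g_\epsilon$-length as its horizontal length $\ell_H$, so $d_\epsilon\le d_{CC}$. Moreover, $\epsilon\mapsto d_\epsilon$ is nondecreasing as $\epsilon$ decreases, because the $g_\epsilon$-length of $aX+bY+cT$ is $\sqrt{a^2+b^2+c^2/\epsilon^2}$. Hence the limit $d_0=\lim d_\epsilon$ exists pointwise and satisfies $d_0\le d_{CC}$.

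The main step, and the expected obstacle, is the reverse inequality $d_0\ge d_{CC}$, together with uniformity on compacta. Fix $p,q$ and choose $g_\epsilon$-almost-minimizing curves $\gamma_\epsilon$ with lengths at most $d_{CC}(p,q)+1$. Writing $\dot\gamma_\epsilon=\alpha X+\beta Y+\tau T$, we get $\int|\tau|\le\epsilon(d_{CC}(p,q)+1)\to0$. The curves lie in a compact set, since $d_1\le d_\epsilon$ controls this. By Arzelà--Ascoli, a subsequence converges uniformly to a curve $\gamma$ whose horizontal components converge weakly in $L^1$ and whose $T$-component vanishes, so $\gamma$ is horizontal. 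Lower semicontinuity of the horizontal length then gives $\ell_H(\gamma)\le d_0(p,q)$, hence $d_{CC}\le d_0$.

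Uniformity comes from Dini's theorem: $d_\epsilon$ increases monotonically to the continuous function $d_{CC}$ on the compact set $\bar B_{CC}(e,R)\times\bar B_{CC}(e,R)$. Continuity of $d_{CC}$ in the manifold topology is stated in the preliminaries (cf.\ \cite{Mont}). Finally, the ball-comparison statement follows from uniform convergence combined with the ball-box estimate $d_{CC}\le C\, d_1^{1/2}$ locally, which holds because $\Hor$ is bracket generating. This estimate ensures that points with $d_\epsilon\le R$ have $d_{CC}$ bounded, so they eventually lie in $\bar B_{CC}(e,R+\delta)$.
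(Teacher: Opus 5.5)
Your proof is correct. The paper gives no argument for this statement: it quotes the result from \cite{CDPT, Gromov}. Your proposal is a self-contained version of the standard argument. First, $d_\epsilon\le d_{CC}$, and $d_\epsilon$ is nondecreasing as $\epsilon\downarrow 0$. Second, a compactness argument on almost-minimising curves, whose $T$-component has $L^1$-norm $O(\epsilon)$, gives $\lim d_\epsilon\ge d_{CC}$. Third, Dini's theorem gives uniformity on compacta. Finally, the identity map serves as the Gromov--Hausdorff approximation. What your route buys over the citation is that it visibly works for an arbitrary left-invariant contact structure $(\Hor, g_{CC})$. It uses only Chow's theorem (continuity of $d_{CC}$) and the completeness of left-invariant Riemannian metrics (compactness of closed $d_1$-balls), whereas \cite{CDPT} is written for the Heisenberg group.

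There are two small points to tighten.

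First, the curves $\gamma_\epsilon$ should have $g_\epsilon$-length at most $d_\epsilon(p,q)+o(1)$, not merely $d_{CC}(p,q)+1$. Otherwise lower semicontinuity only yields $\ell_H(\gamma)\le d_{CC}(p,q)+1$. They should also be parametrised with constant speed on $[0,1]$. This gives the equicontinuity needed for Arzel\`a--Ascoli and $L^\infty$ bounds on the controls, hence weak compactness. The limit curve is then identified as horizontal by passing to the limit in $\phi(\gamma_\epsilon(t))-\phi(\gamma_\epsilon(0))=\int_0^t\bigl(\alpha_\epsilon\, X\phi+\beta_\epsilon\, Y\phi+\tau_\epsilon\, T\phi\bigr)(\gamma_\epsilon(s))\,ds$ for smooth $\phi$.

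Second, the ball--box estimate in your last step is unnecessary. For $\epsilon\le 1$ one has $\{d_\epsilon(e,\cdot)\le R\}\subset \bar B_1(e,R)$, which is compact. The continuous function $d_{CC}(e,\cdot)$ is therefore bounded there by some $M$, and uniform convergence on $\bar B_{CC}(e,M)$ finishes the ball comparison.
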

The first structural equations for the Levi-Civita connection $\nabla^\epsilon$ of $g_\epsilon$ are
\begin{eqnarray*}
d\omega_1&=&\theta_1^2\wedge\omega_2+\theta_1^3\wedge\theta_\epsilon,\\
d\omega_2&=&-\theta_1^2\wedge\omega_1+\theta_2^3\wedge\theta_\epsilon,\\
d\theta_\epsilon&=&-\theta_1^3\wedge\omega_1-\theta_2^3\wedge\omega_2,
\end{eqnarray*}
where
\begin{eqnarray*}
   \theta_1^2&=&-a_3\,\omega_1-b_3\,\omega_2+\left(\frac{\epsilon}{2}(a_2-b_1)-\frac{c}{2\epsilon}\right)\,\theta_\epsilon,\\
   \theta_1^3&=&-\epsilon a_1\,\omega_1-\left(\frac{\epsilon}{2}(b_1+a_2)+\frac{c}{2\epsilon}\right)\,\omega_2,\\
   \theta_2^3&=&\left(\frac{c}{2\epsilon}-\frac{\epsilon}{2}(b_1+a_2)\right)\,\omega_1-\epsilon b_2\,\omega_2,
\end{eqnarray*}
see \cite{Platis2026}. For an arbitrary vector field $U$, the covariant derivatives of $X_1=X$, $X_2=Y$ and $X_3=T_\epsilon$ with respect to $U$ are given by
$$
\nabla^\epsilon_UX_j=\sum_{i=1}^3\theta^i_{j}(U)X_i, \quad j=1,2,3.
$$
Straightforward calculations deduce the following:
\begin{eqnarray*}
\nabla^\epsilon_XX&=&-a_3\,Y-\epsilon a_1\,T_\epsilon,\\
 \nablae_{X}Y &=&a_3\,X+\left(\frac{c}{2\epsilon}-\frac{\epsilon}{2}(b_1+a_2)\right)\,T_{\epsilon},\\ 
 \nabla^\epsilon_XT_\epsilon&=&\epsilon a_1\,X+\left(-\frac{c}{2\epsilon}+\frac{\epsilon}{2}(a_2+b_1)\right)\,Y,\\
 \nablae_{Y}X&=&-b_3\,Y+\left(-\frac{c}{2\epsilon}-\frac{\epsilon}{2}(b_1+a_2)\right)\,T_{\epsilon},\\
 \nablae_{Y}Y &=&b_3\,X-\epsilon b_2\, T_\epsilon,\\
 \nablae_{T_{\epsilon}}X&=&\left(\frac{\epsilon}{2}(a_2-b_1)-\frac{c}{2\epsilon}\right)\,Y,\\
 \nablae_{T_\epsilon}Y&=&\left(\frac{c}{2\epsilon}-\frac{\epsilon}{2}(a_2-b_1)\right)\,X,\\
 \nablae_{T_\epsilon}T_\epsilon&=&0.
\end{eqnarray*}

\subsubsection{Horizontal divergence of vector fields}
If $Z$ is a vector field in $G$, then 
\begin{equation}
Z=fX+gY+hT_{\epsilon}, \quad \text{where} \quad f,g,h\in C^{\infty}(G),\label{Zvf} 
\end{equation}
and the $g_\epsilon$-divergence ${\rm div}_\epsilon(Z)$ of $Z$ is given by (see, e.g., \cite{GallotHulinLafontaine})
\begin{equation}
    \mathrm{div}_{\epsilon}(Z)=g_\epsilon(\nablae_{X}Z,X) + g_\epsilon(\nablae_{Y}Z,Y)+g_\epsilon( \nablae_{T_{\epsilon}}Z,T_\epsilon).\label{RiemDiv}
\end{equation}
Evaluating the inner products yields:
\begin{equation}
    g_\epsilon(\nablae_{X}Z,X)=X(f)+ga_3+\epsilon h a_1,\label{inner1}
\end{equation}
\begin{equation}
   g_\epsilon(\nablae_{Y}Z,Y)=Y(g)+h \epsilon b_2-fb_3,
\end{equation}
\begin{equation}
    g_\epsilon(\nablae_{T_\epsilon}Z,T_\epsilon)=T_\epsilon(h).\label{inner3}
\end{equation}
Substituting (\ref{inner1})-(\ref{inner3}) back into (\ref{RiemDiv}) and using $a_1+b_2=0$, we conclude:
\begin{equation}
   \mathrm{div}_{\epsilon}(Z)=X(f)+Y(g)+\epsilon T(h)+ga_3-fb_3.\label{divRZ2} 
\end{equation}
By letting $\epsilon\to 0^+$ we have the following:
\begin{definition}
    The horizontal divergence of a vector field $Z$ as in \eqref{Zvf} is defined by the formula:
    \begin{equation}
        \mathrm{div}_{\Hor}^{G}(Z)=X(f)+Y(g)+ga_3-fb_3. \label{divh}
    \end{equation}
\end{definition}
In particular, when $Z$ is left-invariant, the functions $f,g,h$ are real constants; in this case,
$$
\dive(Z)=\divH^G(Z)=ga_3-fb_3, \quad Z\in \mathfrak{g}.
$$

\subsection{Hypersurfaces, volume and horizontal area}
Let $u:G\to\R$ be a $C^2$-smooth function such that its tangent map is surjective for all $p\in u^{-1}(0)$. Then, the regular level set theorem \cite{Lee} implies that the set 
\begin{equation*}
    \Sigma=\{p\in G:u(p)=0\},
\end{equation*}
is a regular (embedded) 2-dimensional submanifold of $G$, i.e., a \emph{hypersurface} in $G$. We say that a point $p \in\Sigma$ is \textit{characteristic} if
\begin{equation}
    \nabla_H u(p):=\left(X(u)X+Y(u)Y\right)\bigr\rvert_p=0,
\end{equation}
i.e., if its \emph{horizontal gradient} vanishes at $p$. The \textit{characteristic set} $\mathcal{C}(\Sigma)$ of $\Sigma$ comprises all characteristic points. We set $p=X(u)$, $q=Y(u)$, $r=T_\epsilon (u)$, and introduce the notation:
\begin{equation}
\begin{aligned}\label{notation MF 0}
    l=\|\nabla_{\Hor} u\|:=\sqrt{\left(X(u)\right)^2+\left(Y(u)\right)^2},\quad \overline{p}=\frac{p}{l},\quad\overline{q}=\frac{q}{l},\quad \overline{r}=\frac{r}{l}\\
    l_\epsilon:=\sqrt{\left(X(u)\right)^2+\left(Y(u)\right)^2+\left(T_\epsilon (u)\right)^2},\quad \overline{r_\epsilon}=\frac{r}{l_\epsilon},\\
    \overline{p_\epsilon}=\frac{p}{l_\epsilon}\,\quad\text{and}\quad \overline{q_\epsilon}=\frac{q}{l_\epsilon}.
\end{aligned}
\end{equation}
Note that the functions $\bar{p}, \bar{q}$ and $\bar{r}$ are well defined away from the set $\mathcal{C}(\Sigma)$. For further reference, we will need the following.
\begin{lemma}\label{l,r limits}
    Let $\overline{p}$, $\overline{q}$, $l_\epsilon$ and $\overline{r_\epsilon}$ be as above. Then, when $\epsilon\to 0^+$, the following limits hold:
     \begin{eqnarray*}
        l_\epsilon\to\|\nabla_H u\|,\quad \overline{r_\epsilon}\to 0,\quad \frac{\overline{r_\epsilon}}{l_\epsilon}\to 0,\\
        \frac{\overline{r_\epsilon}}{\epsilon\, l_\epsilon}\to\frac{Tu}{\|\nabla_H u\|^2},\quad \left(\frac{\overline{r_\epsilon}}{\epsilon}\right)^2\to\frac{(Tu)^2}{\|\nabla_H u\|^2},\quad \frac{\overline{r_\epsilon}}{\epsilon^2}\sim\frac{Tu}{\epsilon \|\nabla_H u\|}.
\end{eqnarray*}
\end{lemma}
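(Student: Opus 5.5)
The plan is to compute everything pointwise at a fixed non-characteristic point $p\notin\mathcal{C}(\Sigma)$, where $l=\|\nabla_H u\|>0$. The only $\epsilon$-dependence enters through $T_\epsilon=\epsilon T$. Thus $r=T_\epsilon(u)=\epsilon\,Tu$, while $X(u)$, $Y(u)$ and $Tu$ do not depend on $\epsilon$. This gives the explicit formulas
\[
l_\epsilon=\sqrt{l^2+\epsilon^2(Tu)^2},\qquad \overline{r_\epsilon}=\frac{\epsilon\,Tu}{\sqrt{l^2+\epsilon^2(Tu)^2}} .
\]
Each claimed limit then reduces to an elementary one-variable limit in $\epsilon$.

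The steps, in order, are as follows.

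First, $l_\epsilon\to l$, since the square root is continuous and $\epsilon^2(Tu)^2\to 0$.

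Second, $\overline{r_\epsilon}\to 0$: the numerator tends to $0$ and the denominator tends to $l>0$. Dividing once more by $l_\epsilon\to l>0$ gives $\overline{r_\epsilon}/l_\epsilon\to 0$.

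Third,
\[
\frac{\overline{r_\epsilon}}{\epsilon\,l_\epsilon}=\frac{Tu}{l_\epsilon^2}=\frac{Tu}{l^2+\epsilon^2(Tu)^2}\to\frac{Tu}{\|\nabla_H u\|^2}.
\]

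Fourth,
\[
\left(\frac{\overline{r_\epsilon}}{\epsilon}\right)^2=\frac{(Tu)^2}{l_\epsilon^2}\to\frac{(Tu)^2}{\|\nabla_H u\|^2}.
\]

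Finally, $\overline{r_\epsilon}/\epsilon^2=Tu/(\epsilon\, l_\epsilon)$. Multiplying by $\epsilon\|\nabla_H u\|/Tu$ gives $l/l_\epsilon\to 1$, which is exactly the asymptotic equivalence $\sim$. If $Tu=0$, both sides vanish identically and the statement is read trivially.

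The only point needing care is interpreting the statement correctly. The limits are pointwise on $\Sigma\setminus\mathcal{C}(\Sigma)$, since $\overline p$, $\overline q$ and $\|\nabla_H u\|^{-1}$ are only defined there. The last relation is an asymptotic equivalence, not a limit, since it blows up like $1/\epsilon$ when $Tu\neq0$. I will state this restriction explicitly. If desired, I will also note that the convergence is locally uniform on compact subsets of $\Sigma\setminus\mathcal{C}(\Sigma)$, because $l$ is bounded below and $Tu$ is bounded there; this is useful later for passing limits under integrals.
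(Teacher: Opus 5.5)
Your proposal is correct. The paper states this lemma without proof, and your direct pointwise computation from $r=\epsilon\,Tu$ and $l_\epsilon=\sqrt{l^2+\epsilon^2(Tu)^2}$ is exactly the elementary verification it implicitly relies on. Keep your explicit restriction to $\Sigma\setminus\mathcal{C}(\Sigma)$: it is genuinely needed, because at a characteristic point $l_\epsilon=\epsilon|Tu|$ (with $Tu\neq0$), so $\overline{r_\epsilon}=\pm1$ does not tend to $0$.
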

\begin{defn}
The {\it horizontal unit normal} associated to the surface $\Sigma$ is the vector field
\begin{equation*}
    \nu_H=\frac{\nabla_H u}{\norm{\nabla_H u}}=\overline{p}X+\overline{q}Y.
\end{equation*}
\end{defn}

\begin{defn}\label{E_1, E_2}
    Let $N_{\epsilon} = \overline{p_\epsilon}\,X+\overline{q_\epsilon}\,Y+\overline{r_\epsilon}\,T_\epsilon$ be the Riemannian unit normal to the surface $\Sigma$ and let also
\begin{equation*} 
    E_1=\frac{l}{l_\epsilon}(\overline{r}\,\overline{p}\,X+\overline{r}\,\overline{q}\,Y-T_\epsilon),\quad E_2=J\nu_H=-\overline{q}\,X+\overline{p}\,Y,
\end{equation*}
where $J:\mathrm{T}\,G\to\mathrm{T}\,G$ is the CR structure of $G$. Then $\{E_1,E_2,N_{\epsilon}\}$ is an orthonormal frame w.r.t $g_\epsilon$ associated to $\Sigma$.
\end{defn}

\begin{definition}\label{E_1, E_2, dual}
 The coframe dual to the frame of Definition \ref{E_1, E_2} comprises the differential $1$-forms $\{\alpha_1,\alpha_2,\alpha_3\}$, given by
\begin{equation*}
    \begin{aligned}
    \alpha_1&=(l/l_\epsilon)(\overline{r}\,\overline{p}\,\omega_1+\overline{r}\,\overline{q}\,\omega_2-\theta_\epsilon),\\
    \alpha_2&= -\overline{q}\,\omega_1+\overline{p}\,\omega_2,\\
    \alpha_3&=(l/l_\epsilon)\left(\overline{p}\,\omega_1+\overline{q}\,\omega_2+\overline{r}\,\theta_\epsilon\right).
\end{aligned}
\end{equation*}
\end{definition}
The Riemannian volume form for $(G,g_\epsilon)$ is given by $dV_\epsilon=\alpha_1\wedge \alpha_2\wedge \alpha_3=\omega_1\wedge \omega_2\wedge\theta_{\epsilon}$. For the Haar measure, we have:
\begin{equation}
    d\mu_{G}=(-c)\,\epsilon dV_{\epsilon}>0. \label{HaarVol}
\end{equation}
On the hypersurface $\Sigma$, the induced area element $d\Sigma_{\epsilon}:=i_{N_{\epsilon}}dV_{\epsilon}\bigr\rvert_{\Sigma}$ simplifies to $\alpha_1\wedge\alpha_2$.

\begin{lemma}\label{lem-i}
If $\nu_H$ is the unit horizontal vector field to the surface $\Sigma$, then
\begin{equation}
i_{\nu_H}d\mu_G=(-c)\lim_{\epsilon\to 0^+}\epsilon\,d\Sigma_\epsilon.
\end{equation}
\end{lemma}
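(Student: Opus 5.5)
The plan is a direct computation in the coframe of Definition \ref{E_1, E_2, dual}, comparing both sides as 2-forms restricted to $\Sigma$ (away from $\mathcal{C}(\Sigma)$).

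\emph{Left-hand side.} Write $d\mu_G=(-c)\,\epsilon\,\omega_1\wedge\omega_2\wedge\theta_\epsilon=(-c)\,\omega_1\wedge\omega_2\wedge\theta$ using \eqref{HaarVol}. Contracting with $\nu_H=\overline{p}X+\overline{q}Y$ gives
\begin{equation*}
i_{\nu_H}d\mu_G=(-c)\left(\overline{p}\,\omega_2\wedge\theta-\overline{q}\,\omega_1\wedge\theta\right),
\end{equation*}
and this is independent of $\epsilon$.

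\emph{Right-hand side.} Expand $d\Sigma_\epsilon=\alpha_1\wedge\alpha_2$ using Definition \ref{E_1, E_2, dual}:
\begin{equation*}
\alpha_1\wedge\alpha_2=\frac{l}{l_\epsilon}\Big(\overline{r}\,(\overline{p}^2+\overline{q}^2)\,\omega_1\wedge\omega_2+\overline{p}\,\omega_2\wedge\theta_\epsilon-\overline{q}\,\omega_1\wedge\theta_\epsilon\Big),
\end{equation*}
where $\overline{p}^2+\overline{q}^2=1$ and we have used the signs of $-\theta_\epsilon\wedge(-\overline{q}\omega_1+\overline{p}\omega_2)$. Multiplying by $\epsilon$ and using $\epsilon\theta_\epsilon=\theta$ gives
\begin{equation*}
\epsilon\, d\Sigma_\epsilon=\frac{l}{l_\epsilon}\Big(\epsilon\,\overline{r}\,\omega_1\wedge\omega_2+\overline{p}\,\omega_2\wedge\theta-\overline{q}\,\omega_1\wedge\theta\Big).
\end{equation*}

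\emph{Limit.} By Lemma \ref{l,r limits}, $l_\epsilon\to l$, so $l/l_\epsilon\to1$. For the first term, $\epsilon\overline{r}=\epsilon\, T_\epsilon(u)/l=\epsilon^2 T(u)/l\to0$. Multiplying by $(-c)$ and passing to the limit yields the claimed identity.

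The only delicate point is sign bookkeeping in the wedge products, together with making sense of the statement on $\Sigma$. Restricted to $\Sigma$, the form $\omega_1\wedge\omega_2$ need not vanish, but its coefficient tends to zero uniformly on compact subsets of $\Sigma\setminus\mathcal{C}(\Sigma)$, since $l$ is bounded below there. The convergence is therefore pointwise as forms off the characteristic set, which is the sense in which the lemma is stated.
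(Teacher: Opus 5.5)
Your proof is correct and takes the same route as the paper. The paper also contracts $d\mu_G=(-c)\,\omega_1\wedge\omega_2\wedge\theta$ with $\nu_H=\overline{p}X+\overline{q}Y$ and identifies the result with $(-c)\lim_{\epsilon\to0^+}\epsilon\,d\Sigma_\epsilon$; your explicit expansion of $\alpha_1\wedge\alpha_2$, showing that the factor $l/l_\epsilon\to1$ and the extra term $\epsilon^2(Tu/l)\,\omega_1\wedge\omega_2\to0$, supplies exactly the step the paper leaves implicit.
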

\begin{proof}
Direct contraction yields:
\begin{eqnarray*}
i_{\nu_H}d\mu_G&=&(-c)i_{\overline{p}\,X+\overline{q}\,Y}(\omega_1\wedge\omega_2\wedge\theta) =(-c)\left(\overline{p}\,\omega_2\wedge\theta-\overline{q}\,\omega_1\wedge\theta\right) =(-c)\lim_{\epsilon\to 0^+}\epsilon\,d\Sigma_\epsilon.
\end{eqnarray*}
\end{proof}
We assume that $\Sigma$ bounds a measurable domain $\Omega\subset G$, i.e., $\Sigma=\partial\Omega$. 
\begin{definition}\cite{Shcherbakova2009}\label{def-perimeter}
The {\it horizontal area} or {\it perimeter} of a $C^1$-surface $\Sigma = \partial \Omega$ is defined by
\begin{equation}
    \mathrm{Per}_{G}(\Sigma):=\int_{\Sigma}i_{\nu_H}\,d\mu_G\bigr\rvert_{\Sigma} = (-c)\lim_{\epsilon\to 0^+}\int_{\Sigma}\epsilon\,d\Sigma_\epsilon.\label{per1}
\end{equation}
\end{definition}
In what follows, we shall write
$$
d{\rm Per}_G(\Sigma):=(-c)\lim_{\epsilon\to 0^+}d\Sigma_\epsilon.
$$

\section{An isoperimetric inequality in the non-unimodular case}\label{PROOFISO}
In this section, we prove Theorem \ref{ISOproof}, adapting the Riemannian divergence methods found in \cite{Faessler2025, Pittet2000}.

\begin{theorem}\label{ISOproof}
Let $G$ be a three-dimensional, connected contact Lie group which is non-compact and non-unimodular. Suppose that $\mu_G$ is the Haar measure in $G$. Then for every $\mu_G$-measurable domain $\Omega\subset G$ bounded by a $C^1$-surface $\Sigma=\partial\Omega$, the following isoperimetric inequality holds:
\begin{equation}
    \mathrm{Vol}_{G}(\Omega)\le C\cdot\mathrm{Per}_{G}\left(\Sigma\right),\label{ISO}
\end{equation}
where $C=C(G)$ is a positive constant.
\end{theorem}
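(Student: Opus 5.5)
Since $G$ is non-unimodular, Proposition \ref{prop-unim} gives $|a_3|+|b_3|\neq 0$. The plan is the standard divergence argument: find a left-invariant horizontal field with constant nonzero divergence and apply Gauss' theorem.

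First I choose the field. Set
$$Z_0=\frac{-b_3X+a_3Y}{\sqrt{a_3^2+b_3^2}},$$
a unit horizontal left-invariant field, so $f=-b_3/\sqrt{a_3^2+b_3^2}$, $g=a_3/\sqrt{a_3^2+b_3^2}$ and $h=0$. By \eqref{divRZ2},
$$\dive(Z_0)=\divH^G(Z_0)=ga_3-fb_3=\sqrt{a_3^2+b_3^2}=:\kappa>0,$$
for every $\epsilon>0$, and the value does not depend on $\epsilon$.

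Second, I apply the Riemannian divergence theorem for $g_\epsilon$ on $\Omega$. When $\Omega$ is bounded and $\Sigma$ is $C^1$, this gives
$$\kappa\,\mathrm{Vol}_\epsilon(\Omega)=\int_\Omega \dive(Z_0)\,dV_\epsilon=\int_\Sigma g_\epsilon(Z_0,N_\epsilon)\,d\Sigma_\epsilon .$$
Since $Z_0$ is horizontal, $g_\epsilon(Z_0,N_\epsilon)=f\overline{p_\epsilon}+g\overline{q_\epsilon}$. I multiply both sides by $(-c)\epsilon$ and use \eqref{HaarVol}, which turns the left side into $\kappa\,\mathrm{Vol}_G(\Omega)$, independent of $\epsilon$.

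Third, I let $\epsilon\to0^+$. Lemma \ref{l,r limits} gives $\overline{p_\epsilon}\to\overline p$ and $\overline{q_\epsilon}\to\overline q$ off $\mathcal C(\Sigma)$, and these coefficients are bounded by $1$ everywhere. Together with Lemma \ref{lem-i} and Definition \ref{def-perimeter}, the right side converges to
$$\int_\Sigma g_{CC}(Z_0,\nu_H)\, i_{\nu_H}d\mu_G .$$
By Cauchy--Schwarz, $|g_{CC}(Z_0,\nu_H)|\le 1$, so
$$\kappa\,\mathrm{Vol}_G(\Omega)\le \mathrm{Per}_G(\Sigma),$$
which is \eqref{ISO} with $C=1/\sqrt{a_3^2+b_3^2}$.

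A cleaner route that avoids the limit is to work directly with $d\mu_G$. We have $L_{Z_0}d\mu_G=\divH^G(Z_0)\,d\mu_G$, which follows from the $\epsilon$-formula because $d\mu_G$ is a constant multiple of $dV_\epsilon$. Stokes then gives $\kappa\,\mathrm{Vol}_G(\Omega)=\int_\Sigma i_{Z_0}d\mu_G$. On $\Sigma$, the restriction of $i_{Z_0}d\mu_G$ equals $g_{CC}(Z_0,\nu_H)\,i_{\nu_H}d\mu_G$, because the component of $Z_0$ along $J\nu_H=E_2$ is tangent to $\Sigma$ and contracts to zero on $\Sigma$. I would present this version and cite the $\epsilon$-computation only for the divergence.

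The main obstacle is justifying the limit $\epsilon\to0^+$ near the characteristic set $\mathcal C(\Sigma)$, where $\nu_H$ is undefined. For this I would rely on the bound $|\overline{p_\epsilon}|,|\overline{q_\epsilon}|\le1$ and dominated convergence, observing that $i_{\nu_H}d\mu_G$ carries no mass on $\mathcal C(\Sigma)$. A second issue is unbounded $\Omega$. If $\mathrm{Per}_G(\Sigma)=\infty$ there is nothing to prove. Otherwise I would exhaust $\Omega$ by $\Omega\cap B_R$ and use the identity with the boundary term over $\Omega\cap\partial B_R$. Controlling that term is the delicate point, so I expect to restrict to compact $\Omega$, as the paper's framing of compact bodies suggests. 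Non-compactness of $G$ plays no role beyond guaranteeing that such domains are the interesting case.
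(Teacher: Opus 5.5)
Your proof is correct. Your first version is the paper's own argument: Gauss' theorem for $g_\epsilon$, the limit $\epsilon\to0^+$, then Cauchy--Schwarz. The only change is that you write down the optimal field at once; your $Z_0$ is exactly the maximiser of the paper's $\sup_\phi|a_3\cos\phi-b_3\sin\phi|$, namely $\cos\phi=a_3/\kappa$, $\sin\phi=-b_3/\kappa$.

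The version you say you would present is a genuinely different, limit-free route, and it is the better one. It has two ingredients. The first is Stokes applied to $i_{Z_0}d\mu_G$, with $L_{Z_0}d\mu_G=-\mathrm{tr}(\mathrm{ad}_{Z_0})\,d\mu_G=\kappa\,d\mu_G$. You can see this even without $g_\epsilon$: the flow of $Z_0$ is right translation by $\exp(tZ_0)$, so this is the derivative of the modular character. The second is the pointwise identity $i_{Z_0}d\mu_G|_\Sigma=g_{CC}(Z_0,\nu_H)\,i_{\nu_H}d\mu_G|_\Sigma$, which comes from the tangency of $E_2=J\nu_H$.

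This route shows the paper's limit is vacuous. For every $\epsilon>0$,
$g_\epsilon(Z_0,N_\epsilon)\,\epsilon\,d\Sigma_\epsilon=i_{Z_0}(\epsilon\,dV_\epsilon)|_\Sigma=(-c)^{-1}\,i_{Z_0}d\mu_G|_\Sigma$,
and likewise $\int_\Omega\dive(Z_0)\,\epsilon\,dV_\epsilon=(-c)^{-1}\kappa\,\mathrm{Vol}_G(\Omega)$. So no convergence argument is needed. Your dominated-convergence sketch would in any case have to be run against a fixed reference measure, because $\epsilon\,d\Sigma_\epsilon$ itself moves with $\epsilon$.

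It also dissolves your \emph{main obstacle}. At a characteristic point $x$ we have $du_x=(Tu)(x)\,\theta_x$, so $T_x\Sigma=\Hor_x$. The horizontal field $Z_0$ is then tangent to $\Sigma$, and $i_{Z_0}d\mu_G|_\Sigma$ vanishes there pointwise, whatever the size of $\mathcal{C}(\Sigma)$. The paper's formulation buys only consistency with its definition of perimeter as a limit of rescaled Riemannian areas.

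In either version, state the convention that $\nu_H$ points out of $\Omega$ (e.g.\ $\Omega=\{u<0\}$). That makes $i_{\nu_H}d\mu_G|_\Sigma$ a nonnegative measure for the Stokes orientation, so its integral really is $\mathrm{Per}_G(\Sigma)$.

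Your restriction to compact $\overline{\Omega}$ is not merely convenient but forced, so drop the exhaustion idea. Take $B$ a relatively compact domain with smooth boundary. Then $\Omega=G\setminus\overline{B}$ has $\mathrm{Per}_G(\partial\Omega)<\infty$ but $\mathrm{Vol}_G(\Omega)=\infty$, since a non-compact group has infinite Haar measure. Non-compactness is automatic here, because compact groups are unimodular. The paper's proof assumes this compactness tacitly when it applies Gauss' theorem.
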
 
\begin{proof}
Let $Z\in \mathfrak{g}$ be a left-invariant vector field. Gauss' divergence theorem on $(\Omega, g_\epsilon)$ yields
\begin{equation}
    \int_{\Omega}\dive(Z)\, \epsilon \, dV_\epsilon=\int_{\Sigma}g_\epsilon (Z, N_\epsilon)\, \epsilon\, d\Sigma_\epsilon.\label{Gauss2}
\end{equation}
Now,
					\begin{equation}
						\lim_{\epsilon\to 0^+}\left(\dive(Z)\, \epsilon \, dV_\epsilon\right)=\frac{\divH(Z)}{(-c)}\, d\mu_G, \label{Int1}
					\end{equation}
					whereas 
					\begin{equation}
						\lim_{\epsilon\to 0^+}g_\epsilon( Z, N_\epsilon)\, \epsilon\, d\Sigma_\epsilon=\frac{g_{CC}( Z, \nu_H)}{(-c)}\, d{\rm Per}_G.\label{int2}
					\end{equation}
Taking the limits as $\epsilon \to 0^+$ in \eqref{Gauss2} and using equations \eqref{Int1} and \eqref{int2}, we obtain:
\begin{equation*}
    \int_{\Omega}\divH^G(Z)\,d\mu_G=\int_{\Sigma}g_{CC}( Z_H,\nu_H) \, d{\rm Per}_G \leq \int_{\partial\Omega}\norm{Z_H}\, d{\rm Per}_G,
\end{equation*}
where the last line follows from the Cauchy-Schwarz inequality since $\norm{\nu_H}=1$.

Since $G$ is non-unimodular, there exists a left-invariant vector field $Z_0 \in \mathfrak{g}$ such that 
$$
\divH^G(Z_0) = ga_3-fb_3 = -\mathrm{tr}(\mathrm{ad}_{Z_0}) \neq 0.
$$ Normalising its horizontal component so that $g=\cos\phi$ and $f=\sin\phi$, we get:
$$ |a_3\cos\phi-b_3\sin\phi|\mathrm{Vol}_G(\Omega) \le \mathrm{Per}_G(\partial\Omega). $$
Taking the supremum over all $\phi \in \R$ yields $\sup_{\phi} |a_3\cos\phi-b_3\sin\phi| = \sqrt{a_3^2+b_3^2} := 1/C$, we have
$$
\mathrm{Vol}_{G}(\Omega)\le C\cdot\mathrm{Per}_{G}\left(\Sigma\right).
$$
\end{proof}
We finally prove that this inequality is strict for any non-trivial domain $\Omega$ with finite volume ($\mathrm{Vol}_G(\Omega) < \infty$).
In the first place, the surfaces $\Sigma$ for which equality may occur in the Cauchy-Schwarz step must satisfy $\nu_H = \pm Z_{H}$ away from characteristic points. If equality holds, this forces $\nu_H$ to be equal to a constant left-invariant vector field almost everywhere. This constrains the boundary $\Sigma= \{u=0\}$ to satisfy a linear rigid PDE of the form $Z_H(u)=0$. The integral surfaces of such equations are invariant under non-compact flows extending to infinity, meaning they cannot enclose a domain $\Omega$ of finite non-zero volume.
\begin{example}
In the Affine-Additive group $\Aa$ (see Section \ref{AAexample1}), $a_3=-1$ and $b_3=0$, which gives $C_{\mathrm{best}}=1$. The isoperimetric inequality states:
\begin{equation}
    \mathrm{Vol}_{\Aa}(\Omega) < \mathrm{Per}_{\Aa}(\partial\Omega), \label{aaiso}
\end{equation}
where the inequality is strict for all bounded $\Omega$. Any hypothetical minimiser would have to satisfy the structural boundary equation away from its characteristic locus:
\begin{equation}
U(u)=0 \iff u_a+\lambda\,u_t=0. \label{eqODE}
\end{equation}
The global solutions to this are cylinders of the form $u(a,\lambda,t)=\Phi(\lambda, t-\lambda\cdot a)=0$, which are non-compact and fail to bound finite-volume domains.
\end{example}

\bibliographystyle{abbrv}
				\bibliography{references}
				\bigskip
				\Addresses
\end{document}